\newtheorem{theorem}{Theorem}
\theoremstyle{plain}
\newtheorem{definition}{Definition}
\newtheorem{lemma}{Lemma}
\newtheorem{remark}{Remark}
\numberwithin{equation}{section}
\begin{document}
\title[Existence and data dependence results for the fixed points of
multivalued mappings]{Existence and data dependence results for the fixed
points of multivalued mappings}
\author{Emirhan Hac\i o\u{g}lu}
\address{Department of Mathematics, Trakya University, Edirne 22030, Turkey}
\email{emirhanhacioglu@hotmail.com}
\author{Faik G\"{u}rsoy}
\address{Department of Mathematics, Ad\i yaman University, Ad\i yaman 02040,
Turkey}
\email{faikgursoy02@hotmail.com}
\subjclass[2000]{Primary 05C38, 15A15; Secondary 05A15, 15A18}
\keywords{Keyword one, keyword two, keyword three}

\begin{abstract}
We introduce multivalued G\'{o}rnicki mapping and various new types of
multivalued enriched contractive mappings, viz., multivalued enriched Kannan
mapping, multivalued enriched Chatterjea mapping, and multivalued enriched
Ciri\'{c}-Reich-Rus mapping. Then we establish some existence results for
the fixed points of these multivalued contractive type mappings using the
fixed point property of the average operator of the mappings. Furthermore,
we present some data dependence results for the fixed points of mentioned
classes of mappings.
\end{abstract}

\maketitle

\section{Introduction and Preliminaries}

Let $(X,d)$ be a metric space. A single valued mapping $T:X\rightarrow X$ is
said to be

(i) a contraction (see \cite{banach}) if there exist a $\theta \in \lbrack
0,1)$ such that 
\begin{equation*}
d(Tx,Ty)\leq \theta d(x,y)\text{ for all }x,y\in X\text{,}
\end{equation*}%
(ii) a Kannan mapping (see \cite{kannan}) if there exist a $\theta \in
\lbrack 0,\frac{1}{2})$ such that 
\begin{equation*}
d(Tx,Ty)\leq \theta \lbrack d(x,Tx)+d(y,Ty)]\text{ for all }x,y\in X\text{,}
\end{equation*}%
(iii) a Chatterjea mapping mapping (see \cite{chat} )see if there exist a $%
\theta \in \lbrack 0,\frac{1}{2})$ such that 
\begin{equation*}
d(Tx,Ty)\leq \theta \lbrack d(x,Ty)+d(y,Tx)]\text{ for all }x,y\in X\text{,}
\end{equation*}%
(iv) a Ciri\'{c}-Reich-Rus mapping (see \cite{VB9}) if there exist $a,b\in
\lbrack 0,1)$ with $a+2b<1$ such that

\begin{equation*}
d(Tx,Ty)\leq ad(x,y)+b[d(x,Ty)+d(y,Tx)]\text{ for all }x,y\in X\text{.}
\end{equation*}%
A point $z\in X$ with $z=Tz$ is called a fixed point of $T$ and set of fixed
points of $T$ is denoted by $F(T)$.

Fixed point theory studies mainly focus on examining the existence and
approximation of fixed points of various contractive type mappings. The main
purpose of such studies is to determine under which conditions a contractive
type mapping will have fixed points. The renowned Banach contraction
principle is the most basic known result in this context. As the improvement
of the Banach contraction principle in some sense, numerous works have been
devoted to defining various contractive type mappings and studying the
existence of their fixed points as well as their related qualitative
properties. Very recently, again for such a purpose, Berinde \cite{VB1} has
initiated the investigation of the existence of fixed points of
non-contraction mappings by using the fixed point property of the average
operator of the mappings in Banach spaces. The basic idea behind the work of
Berinde is that even though a mapping is not a contraction, the mapping
defined by the convex combination of the value of the mapping at a point and
the point itself can be a contraction. Recent studies in this direction show
that this idea is very effective in studying the existence of fixed points
of mappings that do not belong to one of the known classes of contractive
type mappings such as contraction mapping, Kannan mapping, Chatterjea
mapping, and Ciri\'{c}-Reich-Rus mapping (see \cite%
{VB2,VB3,VB4,VB5,VB6,VB7,VB8,VB9,VB10,Gornicki2,Suantai,Lakshmi}).

The development in the fixed-point theory of multi-valued mappings is
usually depending on the development in the theory of the single-valued
mappings cases. In 1922, Banach \cite{banach} proved the existence of the
fixed point of contraction mappings and suggested a successful approximation
method to the fixed point of contraction mappings. In 1969, Nadler \cite%
{nadler} generalized Banach's result to multivalued contraction mappings and
the same applied to various contractive type mappings in different settings
by numerous researchers.

A mapping $T:X\rightarrow P(X)$ is called multivalued mapping, that is, $T$
maps every point of $X$ to a subset of $X$. It is an indisputable fact that
metric or metric-like structures are needed on the range sets of multivalued
mappings to investigate the existence of fixed points of the multivalued
mappings. The Hausdorff metric $H$ on $CB(X)$, a nonempty, closed and convex
subsets of $X$, is defined by

\begin{equation*}
H(A,B)=\max \{\sup_{x\in A}d(x,B),\sup_{x\in B}d(x,A)\}
\end{equation*}%
\ in which $d(x,B)=\inf \{d(x,y);y\in B\}$ and the distance $\delta $ on $%
CB(X)$ is defined by

\begin{equation*}
\delta (A,B)=\sup \{d(a,b):a\in A,b\in B\}.
\end{equation*}%
A point $z$ is called fixed point of multivalued mapping $T$ if $z\in Tz$
and the set of all fixed points of $T$ is denoted by $F_{T}$. A multivalued
mapping $T:X\rightarrow CB\left( X\right) $ is said to be

(v) a multivalued contraction mapping (see \cite{nadler}) if there exists a $%
\theta \in \lbrack 0,1)$ such that 
\begin{equation*}
H(Tx,Ty)\leq \theta d(x,y)\text{ for all }x,y\in X\text{,}
\end{equation*}%
(vi) a multivalued Kannan mapping (see \cite{damjan}) if there exists a $%
\theta \in \lbrack 0,\frac{1}{2})$ such that 
\begin{equation*}
H(Tx,Ty)\leq \theta \lbrack d(x,Tx)+d(y,Ty)]\text{ for all }x,y\in X\text{,}
\end{equation*}%
(vii) a multivalued Chatterjea mapping (see \cite{Neam}) if there exists a $%
\theta \in \lbrack 0,\frac{1}{2})$ such that 
\begin{equation*}
H(Tx,Ty)\leq \theta \lbrack d(x,Ty)+d(y,Tx)]\text{ for all }x,y\in X\text{,}
\end{equation*}%
(viii) a multivalued Ciri\'{c}-Reich-Rus mapping (see \cite{ciric}) if there
exist $a,b\in \lbrack 0,1)$ with $a+2b<1$ such that

\begin{equation*}
H(Tx,Ty)\leq ad(x,y)+b[d(x,Ty)+d(y,Tx)]\ \text{for all }x,y\in X\text{.}
\end{equation*}

We recall definitions of enriched version of some mappings as follows.

\begin{definition}
Let $\left( X,\left\Vert \cdot ,\cdot \right\Vert \right) $ be a normed
linear space in which $d\left( x,y\right) =\left\Vert x-y\right\Vert $ for
every $x,y\in X$. A mapping $T:X\rightarrow X$ is\ called

\begin{enumerate}
\item an enriched contraction (see \cite{VB1})  if there exist constants $%
b\in \lbrack 0,\infty )$ and $\theta \in \lbrack 0,b+1)$ such that%
\begin{equation*}
d(bx+Tx,by+Ty)\leq \theta d(x,y)\text{, for every }x,y\in X\text{,}
\end{equation*}

\item an enriched Kannan mapping (see \cite{VB2}) if there exist $b\in
\lbrack 0,\infty )$ and $\theta \in \lbrack 0,1/2)$ such that%
\begin{equation*}
d(bx+Tx,by+Ty))\leq \theta \lbrack d(x,Tx)+d(y,Ty)]\text{, for every }x,y\in
X\text{,}
\end{equation*}

\item an enriched Chatterjea mapping (see \cite{VB11}) if there exist $b\in
\lbrack 0,\infty )$ and $\theta \in \lbrack 0,1/2)$ such that%
\begin{equation*}
d(bx+Tx,by+Ty)\leq \theta \lbrack d(x,Ty)+d(y,Tx)]\text{, for every }x,y\in X%
\text{,}
\end{equation*}

\item an enriched Ciri\'{c}-Reich-Rus mapping (see \cite{VB9}) if there
exist $a,b\in \lbrack 0,\infty )$ satisfying $a+2b<1$ such that%
\begin{equation*}
d(bx+Tx,by+Ty)\leq ad(x,y)+b[d(x,Ty)+d(y,Tx)]\text{, for every }x,y\in X%
\text{.}
\end{equation*}
\end{enumerate}
\end{definition}

Popescu \cite{popescu} defined a new contractive type mapping called G\'{o}%
rnicki mapping and provide additional information about enriched
contractions, enriched Kannan mappings and enriched Chatterjea mappings. He
studied the existence of fixed point of G\'{o}rnicki mapping and showed that
a G\'{o}rnicki mapping properly includes enriched contractions, enriched
Kannan mappings and enriched Chatterjea mappings.

\begin{definition}
Let $(X,d)$ be a metric space. A mapping $T:X\rightarrow X$ is\ called a G%
\'{o}rnicki mapping if there exists $M\in \lbrack 0,1)$ such that%
\begin{equation*}
d(Tx,Ty)\leq M[d(x,Tx)+d(y,Ty)+d(x,y)]\text{, for every }x,y\in X\text{,}
\end{equation*}%
and there exist non negative real constants $a$,$b$ with $a<1$, $b\in
\lbrack 0,\infty )$ such that for arbitrary $x\in X$ there exists $u\in X$
with $d(u,Tu)\leq ad(x,Tx)$ and\ $d(u,x)\leq bd(x,Tx)$.
\end{definition}

Abbas et al. \cite{VB10} extended the definition of enriched contraction
mapping to the case of multivalued mapping as follows.

\begin{definition}
Let $(X,\left\Vert \cdot ,\cdot \right\Vert )$ be a normed linear space with 
$d(x,y)=\left\Vert x-y\right\Vert $ for every $x,y\in X$. A mapping $%
T:X\rightarrow CB(X)$ is\ called a multivalued enriched contraction if there
exist constants $b\in \lbrack 0,\infty )$ and $\theta \in \lbrack 0,b+1)$
such that%
\begin{equation*}
H(bx+Tx,by+Ty)\leq \theta d(x,y)\text{, for every }x,y\in X\text{.}
\end{equation*}
\end{definition}

\begin{lemma}
\label{L1}(See \cite{VB10}) Let $(X,\left\Vert \cdot ,\cdot \right\Vert )$
be a linear normed space, $T:X\rightarrow CB(X)$ be a multivalued mapping
and $T_{\lambda }x=\{(1-\lambda )x+\lambda u:u\in Tx\}.$ Then for any $%
\lambda \in \lbrack 0,1)$

$Fix(T)=Fix(T_{\lambda })$.
\end{lemma}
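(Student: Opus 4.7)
The plan is to prove the two inclusions $Fix(T)\subseteq Fix(T_\lambda)$ and $Fix(T_\lambda)\subseteq Fix(T)$ directly from the definition of $T_\lambda$, exploiting the fact that the convex combination collapses at a fixed point.

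First I would handle the easy direction. Suppose $z\in Fix(T)$, i.e., $z\in Tz$. Taking $u:=z\in Tz$ inside the defining set of $T_\lambda z$, we get $(1-\lambda)z+\lambda u=(1-\lambda)z+\lambda z=z$, so $z\in T_\lambda z$. This inclusion works for every $\lambda\in[0,1)$ and requires nothing beyond plugging in.

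For the reverse direction, assume $z\in T_\lambda z$. By the definition of $T_\lambda$, there exists $u\in Tz$ with $z=(1-\lambda)z+\lambda u$. Rearranging gives $\lambda z=\lambda u$; here is the one place where one uses that the scalar $\lambda$ can be cancelled, yielding $z=u\in Tz$, so $z\in Fix(T)$. (This step is the only subtle point: it requires $\lambda\neq 0$, so strictly speaking the lemma is meaningful for $\lambda\in(0,1)$; when $\lambda=0$ one has $T_0x=\{x\}$ and the statement degenerates, so I would either note this or assume $\lambda\in(0,1)$ as in the applications.)

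I do not expect any real obstacle: the proof is purely algebraic and uses only the normed linear structure to form the convex combination. The key observation is that the point $x$ always sits ``inside'' $T_\lambda x$ in the sense that $u=x$ is the unique element of $Tx$ producing $x$ itself as the combination, so fixed points of $T$ and $T_\lambda$ are in one-to-one correspondence.
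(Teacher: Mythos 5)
Your proof is correct, and it is the natural (essentially the only) argument: both inclusions follow by writing out the convex combination and cancelling $\lambda$. The paper itself does not prove this lemma but imports it from \cite{VB10}, so there is no alternative route to compare against. Your side remark is worth emphasizing: as printed, the lemma claims the identity for all $\lambda\in[0,1)$, but at $\lambda=0$ one has $T_{0}x=\{x\}$, hence $Fix(T_{0})=X$, and the equality $Fix(T)=Fix(T_{0})$ fails unless $T$ fixes every point. The correct range is $\lambda\in(0,1)$ (or $(0,1]$), which is also the only range used later in the paper (where $\lambda=\frac{1}{b+1}$ with $b\geq 0$); your cancellation step makes precisely this hypothesis visible.
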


\begin{lemma}
\label{kamran}(See \cite{kamran}) Let $(X,d)$ be a metric space, $A\in CB(X)$%
. Then for any $x\in X,$ and $\mu >1$, there exist an element $a\in A$
satisfying%
\begin{equation*}
d(x,a)\leq \mu d(x,A).
\end{equation*}
\end{lemma}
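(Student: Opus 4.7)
The plan is to argue directly from the definition of infimum, with a small case split to handle the degenerate situation cleanly. First I would set $r = d(x,A) = \inf\{d(x,y) : y \in A\}$ and observe that since $\mu > 1$, we have $\mu r \geq r$, with strict inequality precisely when $r > 0$. The target inequality $d(x,a) \leq \mu r$ is then just a statement that some point of $A$ realizes a value within a prescribed tolerance above the infimum.

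Next I would treat the case $r > 0$. Here $\mu r > r$, so $\mu r$ is strictly greater than the greatest lower bound of $\{d(x,y) : y \in A\}$, hence is not itself a lower bound. By the standard approximation property of the infimum, there must exist some $a \in A$ with $d(x,a) < \mu r = \mu d(x,A)$, which in particular yields the required $\leq$ inequality.

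For the remaining case $r = 0$, one uses that $A \in CB(X)$ is closed. Since $d(x,A) = 0$ forces $x$ to be a limit point (or element) of $A$, the closedness of $A$ gives $x \in A$, and one chooses $a = x$; then $d(x,a) = 0 = \mu\cdot 0 = \mu d(x,A)$. Putting the two cases together completes the proof.

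The argument is almost entirely book-keeping from the definition of infimum; there is no real obstacle, and the only subtlety worth spelling out is the $r = 0$ case, where the hypothesis $\mu > 1$ is useless and one must instead invoke the closedness of $A$ from $A \in CB(X)$.
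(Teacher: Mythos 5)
Your argument is correct and complete. Note that the paper itself gives no proof of this lemma---it is quoted verbatim from the cited reference---so there is nothing to compare against; your two-case argument (the approximation property of the infimum when $d(x,A)>0$, and closedness of $A$ to conclude $x\in A$ when $d(x,A)=0$) is the standard proof and correctly isolates the one genuine subtlety, namely that $\mu>1$ does no work in the degenerate case.
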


We define the multivalued G\'{o}rnicki mapping as follows.

\begin{definition}
Let $(X,d)$ be a metric space. A mapping $T:X\rightarrow CB(X)$ is\ called a
multivalued G\'{o}rnicki mapping if there exists $M\in \lbrack 0,1)$ such
that%
\begin{equation*}
H(Tx,Ty)\leq M[d(x,Tx)+d(y,Ty)+d(x,y)]\text{, for every }x,y\in X\text{,}
\end{equation*}%
and there exist non negative real constants $a$,$b$ with $a<1$, $b\in
\lbrack 0,\infty )$ such that for arbitrary $x\in X$ there exists $u\in X$
with $d(u,Tu)\leq ad(x,Tx)$ and\ $d(u,x)\leq bd(x,Tx)$.
\end{definition}

We are now able to generalize, in turn, single-valued enriched Kannan
mappings, enriched Chatterjea mappings, and enriched Ciri\'{c}-Reich-Rus
mappings to the corresponding multivalued mappings as under.

\begin{definition}
Let $(X,\left\Vert \cdot ,\cdot \right\Vert )$ be a normed linear space with 
$d(x,y)=\left\Vert x-y\right\Vert $ for every $x,y\in X$. A mapping $%
T:X\rightarrow CB(X)$ is\ called

\begin{enumerate}
\item a multivalued enriched Kannan mapping if there exist $b\in \lbrack
0,\infty )$ and $\theta \in \lbrack 0,1/2)$ such that%
\begin{equation*}
H(bx+Tx,by+Ty)\leq \theta \lbrack d(x,Tx)+d(y,Ty)]\text{, for every }x,y\in X%
\text{,}
\end{equation*}

\item a multivalued enriched Chatterjea mapping if there exist $b\in \lbrack
0,\infty )$ and $\theta \in \lbrack 0,1/2)$ such that%
\begin{equation*}
H(bx+Tx,by+Ty)\leq \theta \lbrack d((b+1)x,by+Ty)+d((b+1)y,bx+Tx)]\text{,
for every }x,y\in X\text{,}
\end{equation*}

\item a multivalued enriched Ciri\'{c}-Reich-Rus mapping if there exist $%
a,b,c\in \lbrack 0,\infty )$ satisfying $a+2b<1$ such that%
\begin{equation*}
H(cx+Tx,cy+Ty)\leq ad(x,y)+b[d(x,Tx)+d(y,Ty)]\text{, for every }x,y\in X%
\text{.}
\end{equation*}
\end{enumerate}
\end{definition}

\section{Existence results for Multivalued G\'{o}rnicki Mappings}

In this section we prove existence theorems for Multivalued G\'{o}rnicki
Mappings.

\begin{theorem}
\label{gornickI} Let $(X,d)$ be a complete metric space and let $%
T:X\rightarrow CB(X)$ be a multivalued G\'{o}rnicki mapping. Then, $T$ has a
fixed point.

\begin{proof}
Let $x_{0}\in X$ and $d(x_{n+1}Tx_{n+1})\leq ad(x_{n},Tx_{n})$ and $%
d(x_{n+1}x_{n})\leq bd(x_{n},Tx_{n})$. Then we have 
\begin{eqnarray*}
d(x_{n+1}x_{n}) &\leq &bd(x_{n},Tx_{n}) \\
&\leq &bad(x_{n-1},Tx_{n-1}) \\
&\leq &ba^{2}d(x_{n-2},Tx_{n-2}) \\
&&... \\
&\leq &ba^{n}d(x_{0},Tx_{0}),
\end{eqnarray*}

which implies that\ $\{x_{n}\}$ is a Cauchy sequence and $\lim_{n\rightarrow
\infty }$ $d(x_{n},Tx_{n})=0$. Therefore, there exists a $z\in X$ such that 
$\lim_{n\rightarrow \infty }$ $x_{n}=z$. Since%
\begin{eqnarray*}
d(x_{n},Tz) &\leq &d(x_{n},Tx_{n})+H(Tx_{n},Tz) \\
&\leq &d(x_{n},Tx_{n})+M[d(x_{n},Tx_{n})+d(z,Tz)+d(x_{n},z)]\text{,}
\end{eqnarray*}%
\begin{equation*}
d(z,Tz)\leq Md(z,Tz)
\end{equation*}%
which means that $z\in Tz$.
\end{proof}
\end{theorem}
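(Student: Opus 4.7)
The plan is to exploit the two ingredients in the definition of a multivalued Górnicki mapping in sequence: the existence condition (2) supplies a Picard-like iteration with geometric decay, while the Lipschitz-type condition (1) is used only at the end, to certify that the limit point is fixed.

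First I would pick any $x_{0}\in X$ and recursively apply condition (2) to obtain a sequence $\{x_{n}\}$ with $d(x_{n+1},Tx_{n+1})\leq a\,d(x_{n},Tx_{n})$ and $d(x_{n+1},x_{n})\leq b\,d(x_{n},Tx_{n})$. A trivial induction then gives $d(x_{n},Tx_{n})\leq a^{n}d(x_{0},Tx_{0})$ and hence $d(x_{n+1},x_{n})\leq b\,a^{n}d(x_{0},Tx_{0})$. Since $a\in[0,1)$, a telescoping/geometric-series estimate $\sum_{k=n}^{m-1} d(x_{k+1},x_{k})\leq b\,d(x_{0},Tx_{0})\cdot a^{n}/(1-a)$ shows $\{x_{n}\}$ is Cauchy. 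Completeness of $X$ delivers a limit $z\in X$, and we also record $d(x_{n},Tx_{n})\to 0$.

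Next I would show $z\in Tz$. For each $n$ the triangle inequality for the point-to-set distance gives $d(z,Tz)\leq d(z,x_{n})+d(x_{n},Tz)$, and for any $y\in Tx_{n}$ we have $d(x_{n},Tz)\leq d(x_{n},y)+d(y,Tz)$. Taking an infimum over $y\in Tx_{n}$ (or, if the infimum is not attained, invoking Lemma \ref{kamran} with $\mu\downarrow 1$) and using $d(y,Tz)\leq H(Tx_{n},Tz)$ yields $d(x_{n},Tz)\leq d(x_{n},Tx_{n})+H(Tx_{n},Tz)$. The Górnicki inequality (1) then bounds $H(Tx_{n},Tz)\leq M[d(x_{n},Tx_{n})+d(z,Tz)+d(x_{n},z)]$, so combining everything,
\begin{equation*}
d(z,Tz)\leq d(z,x_{n})+(1+M)d(x_{n},Tx_{n})+M\,d(x_{n},z)+M\,d(z,Tz).
\end{equation*}
Letting $n\to\infty$ annihilates the first three terms on the right, leaving $(1-M)d(z,Tz)\leq 0$; since $M<1$ we conclude $d(z,Tz)=0$, and because $Tz\in CB(X)$ is closed, $z\in Tz$.

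The only mildly delicate step is the passage from $H(Tx_{n},Tz)$ to a bound on $d(x_{n},Tz)$, because $Tx_{n}$ need not contain a point realizing the infimum $d(x_{n},Tx_{n})$. This is precisely the role of Lemma \ref{kamran}, and otherwise the proof is a routine Cauchy-plus-closed-graph argument; no fixed-point property of an averaged operator $T_{\lambda}$ is needed here since the Górnicki condition is already stated in metric-space (not normed-space) form.
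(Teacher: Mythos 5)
Your proposal is correct and follows essentially the same route as the paper's own proof: iterate the second (existence) clause of the Górnicki definition to get a geometrically Cauchy sequence, then pass to the limit in $d(x_n,Tz)\leq d(x_n,Tx_n)+H(Tx_n,Tz)$ together with the Górnicki inequality to get $d(z,Tz)\leq M\,d(z,Tz)$ and conclude $z\in Tz$ by closedness of $Tz$. Your extra care about the inequality $d(x,B)\leq d(x,A)+H(A,B)$ (which holds by a plain infimum argument, no appeal to Lemma \ref{kamran} needed) and about closedness of $Tz$ only makes explicit what the paper leaves implicit.
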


\begin{remark}
If we change definition of multivalued G\'{o}rnicki mapping by one of the
following

\begin{enumerate}
\item[(i)] if there exists $M\in \lbrack 0,1)$ such that%
\begin{equation*}
\delta (Tx,Ty)\leq M[d(x,Tx)+d(y,Ty)+d(x,y)]\text{, for every }x,y\in X\text{%
,}
\end{equation*}%
and there exist $b\in \lbrack 0,\infty )$ and $a\in \lbrack 0,1)$ such that
there exists $u\in X$ with $d(u,Tu)\leq ad(x,Tx)$ and\ $d(u,x)\leq bd(x,Tx)$
for any $x\in X$,

or

\item[(ii)] if there exists $M\in \lbrack 0,1)$ such that 
\begin{equation*}
\delta (Tx,Ty)\leq M[\delta (x,Tx)+\delta (y,Ty)+d(x,y)]\text{, for every }%
x,y\in X\text{,}
\end{equation*}%
and there exist $b\in \lbrack 0,\infty )$ and $a\in \lbrack 0,1)$ such that
there exists $u\in X$ with $\delta (u,Tu)\leq a\delta (x,Tx)$ and\ $%
d(u,x)\leq b\delta (x,Tx)$ for any $x\in X$,
\end{enumerate}

then Theorem \ref{gornickI} still holds. Moreover, $T$ has unique end point,
that is, there exist a point $z$ such that $Tz=\{z\}$.
\end{remark}

\begin{theorem}
\label{gornickII}Let $(X,d)$ be a complete metric space and let $%
T:X\rightarrow CB(X)$ be a multivalued mapping satisfying (i). If $T$ has an
\ a.f.p.s then, $T$ has a unique end point.
\end{theorem}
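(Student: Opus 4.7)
The plan is to begin with the hypothesized a.f.p.s.\ $\{x_n\}$ in $X$ (so that $d(x_n,Tx_n)\to 0$), show that it is Cauchy, identify its limit $z$ via completeness, and then prove $Tz=\{z\}$ and uniqueness. Note that the only piece of condition (i) we will use is the $\delta$-inequality; the selection clause was needed in Theorem \ref{gornickI} merely to build an a.f.p.s., which is now supplied by hypothesis.

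First I would verify the Cauchy property. Fix $\mu>1$ and, using Lemma \ref{kamran}, pick for each $n$ a point $y_n\in Tx_n$ with $d(x_n,y_n)\leq \mu\, d(x_n,Tx_n)$. Since any pair of points drawn from $Tx_n$ and $Tx_m$ is bounded above by $\delta(Tx_n,Tx_m)$, the triangle inequality combined with condition (i) gives
\begin{equation*}
d(x_n,x_m)\leq d(x_n,y_n)+\delta(Tx_n,Tx_m)+d(y_m,x_m)\leq (\mu+M)[d(x_n,Tx_n)+d(x_m,Tx_m)]+M\,d(x_n,x_m),
\end{equation*}
so $(1-M)\,d(x_n,x_m)\leq (\mu+M)[d(x_n,Tx_n)+d(x_m,Tx_m)]\to 0$. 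Hence $\{x_n\}$ is Cauchy; completeness yields $z=\lim_{n\to\infty}x_n$.

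Next, to show $Tz=\{z\}$, pick an arbitrary $y\in Tz$ and, for each $n$, a point $w_n\in Tx_n$ with $d(x_n,w_n)\leq \mu\, d(x_n,Tx_n)$. Then $d(y,w_n)\leq \delta(Tz,Tx_n)\leq M[d(z,Tz)+d(x_n,Tx_n)+d(z,x_n)]$, whence
\begin{equation*}
d(y,z)\leq d(y,w_n)+d(w_n,x_n)+d(x_n,z)\longrightarrow M\,d(z,Tz)
\end{equation*}
as $n\to\infty$. Taking the supremum over $y\in Tz$ gives $\delta(\{z\},Tz)\leq M\,d(z,Tz)$. Since $d(z,Tz)\leq \delta(\{z\},Tz)$ and $M<1$, this forces $d(z,Tz)=0$, and then $\delta(\{z\},Tz)=0$, i.e., $Tz=\{z\}$. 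Uniqueness follows immediately from (i): if $z_1,z_2$ are two end points, then $d(z_1,z_2)=\delta(Tz_1,Tz_2)\leq M[d(z_1,Tz_1)+d(z_2,Tz_2)+d(z_1,z_2)]=M\,d(z_1,z_2)$, so $z_1=z_2$.

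The main technical point to watch is that condition (i) is phrased with the $\delta$-distance (a supremum) rather than the Hausdorff metric $H$; this is in fact what makes the singleton conclusion available, since $\delta(\{z\},Tz)$ simultaneously controls every point of $Tz$, but one must use Lemma \ref{kamran} to produce the concrete selections $y_n,w_n\in Tx_n$ that convert $d(x_n,Tx_n)$ into usable triangle-inequality terms and that allow the small residuals to be absorbed against the $(1-M)$ coefficient.
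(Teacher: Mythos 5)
Your proof is correct and follows essentially the same route as the paper: use the $\delta$-contractivity inequality of condition (i) to show the a.f.p.s.\ is Cauchy by absorbing the $M$-term, pass to the limit $z$, derive $\delta(z,Tz)\leq M\,d(z,Tz)$ to conclude $Tz=\{z\}$, and obtain uniqueness from the same inequality. The only cosmetic difference is that you route the triangle inequalities through explicit selections from Lemma \ref{kamran}, whereas the paper works directly with $\delta(Tx_n,Tx_{n+m})$ and $d(z,Tx_n)$; both arguments are equivalent in substance.
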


\begin{proof}
Let $\{x_{n}\}\in X$ be an a.f.p.s of $T$. Then, we have%
\begin{eqnarray*}
\delta (Tx_{n},Tx_{n+m}) &\leq
&M[d(x_{n},Tx_{n})+d(x_{n+m},Tx_{n+m})+d(x_{n},x_{n+m})] \\
&\leq &M[d(x_{n},Tx_{n})+d(x_{n+m},Tx_{n+m})+d(x_{n},Tx_{n}) \\
&&+\delta (Tx_{n},Tx_{n+m})+d(x_{n+m},Tx_{n+m})]\text{,}
\end{eqnarray*}%
which implies%
\begin{equation*}
\lim_{n\rightarrow \infty }\delta (Tx_{n},Tx_{n+m})=0\text{.}
\end{equation*}%
Since%
\begin{equation*}
|d(x_{n},x_{n+m})-\delta (Tx_{n},Tx_{n+m})|\leq
d(x_{n},Tx_{n})+d(x_{n+m},Tx_{n+m})\text{,}
\end{equation*}%
we have $\lim_{n\rightarrow \infty }d(x_{n},x_{n+m})=0$ which means that\ $%
\{x_{n}\}$ is a Cauchy sequence. Therefore, there exists a $z\in X$ such
that $\lim_{n\rightarrow \infty }$ $x_{n}=z$. Then%
\begin{eqnarray*}
\delta (z,Tz) &\leq &d(z,Tx_{n})+\delta (Tx_{n},Tz) \\
&\leq &d(z,Tx_{n})+M[d(x_{n},Tx_{n})+d(z,Tz)+d(x_{n},z)]
\end{eqnarray*}%
implies that 
\begin{equation*}
\delta (z,Tz)\leq Md(z,Tz)\text{.}
\end{equation*}%
Hence, $Tz=\{z\}$. Assume that $T$ has two fixed points $z_{1},z_{2}$. Then,
we have 
\begin{eqnarray*}
d(z_{1},z_{2}) &\leq &\delta (Tz_{1},Tz_{2}) \\
&\leq &M[d(z_{1},Tz_{1})+d(z_{2},Tz_{2})+d(z_{1},z_{2})] \\
&=&Md(z_{1},z_{2})
\end{eqnarray*}%
which means that $z_{1}=z_{2}$.
\end{proof}

\begin{theorem}
Let $(X,d)$ be a complete metric space and let $T:X\rightarrow CB(X)$ be a
multivalued mapping satisfying (ii). If $\ $there exist a sequence $%
\{x_{n}\}\in X$ with $\lim_{n\rightarrow \infty }\delta (x_{n},Tx_{n})=0$,
then $T$ has a unique end point.

\begin{proof}
The proof is similar to the proof of\ Theorem \ref{gornickII}.
\end{proof}
\end{theorem}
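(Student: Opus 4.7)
The plan is to mirror the proof of Theorem \ref{gornickII}, replacing $d(x_n, Tx_n)$ by $\delta(x_n, Tx_n)$ throughout. The strengthened hypothesis $\lim_{n\to\infty}\delta(x_n, Tx_n) = 0$ is precisely what is needed to compensate for the fact that condition (ii) controls $\delta(Tx, Ty)$ in terms of the $\delta$-distances $\delta(x, Tx)$ and $\delta(y, Ty)$.

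First I would apply (ii) to the pair $(x_n, x_{n+m})$ to get $\delta(Tx_n, Tx_{n+m}) \leq M[\delta(x_n, Tx_n) + \delta(x_{n+m}, Tx_{n+m}) + d(x_n, x_{n+m})]$. To control the stray $d(x_n, x_{n+m})$ term I would run the triangle inequality through arbitrary points of $Tx_n$ and $Tx_{n+m}$ (both nonempty since $T:X\to CB(X)$), which yields $d(x_n, x_{n+m}) \leq \delta(x_n, Tx_n) + \delta(Tx_n, Tx_{n+m}) + \delta(x_{n+m}, Tx_{n+m})$ together with the symmetric bound $\delta(Tx_n, Tx_{n+m}) \leq d(x_n, x_{n+m}) + \delta(x_n, Tx_n) + \delta(x_{n+m}, Tx_{n+m})$. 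Plugging the first into the contractive inequality and absorbing the resulting $M\delta(Tx_n, Tx_{n+m})$ term on the left gives $(1-M)\delta(Tx_n, Tx_{n+m}) \leq 2M[\delta(x_n, Tx_n) + \delta(x_{n+m}, Tx_{n+m})] \to 0$. The second bound then forces $d(x_n, x_{n+m}) \to 0$, so $\{x_n\}$ is Cauchy and converges to some $z \in X$ by completeness.

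To verify that $z$ is the end point I would decompose $\delta(z, Tz) \leq d(z, x_n) + \delta(x_n, Tx_n) + \delta(Tx_n, Tz)$, expand the last term using (ii), and pass to the limit, which leaves $(1-M)\delta(z, Tz) \leq 0$ and hence $Tz = \{z\}$. Uniqueness is then automatic: for any two end points $z_1, z_2$, substituting $\delta(z_i, Tz_i) = 0$ into (ii) yields $d(z_1, z_2) = \delta(Tz_1, Tz_2) \leq M d(z_1, z_2)$, so $z_1 = z_2$. I expect the only mildly subtle point to be the ``triangle-like'' inequalities relating $d(x_n, x_{n+m})$ and $\delta(Tx_n, Tx_{n+m})$, since $\delta$ is merely a diameter functional rather than a metric; beyond that, the argument is essentially a verbatim translation of the proof of Theorem \ref{gornickII}, which is why the author asserts the proofs are similar.
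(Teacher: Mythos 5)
Your proof is correct and is precisely the adaptation of the proof of Theorem \ref{gornickII} that the paper intends (the paper's own proof is just the sentence ``similar to Theorem \ref{gornickII}''). The only quibble is a labeling slip: it is your \emph{first} triangle-type bound, $d(x_n,x_{n+m})\le \delta(x_n,Tx_n)+\delta(Tx_n,Tx_{n+m})+\delta(x_{n+m},Tx_{n+m})$, not the second, that converts $\delta(Tx_n,Tx_{n+m})\to 0$ into $d(x_n,x_{n+m})\to 0$; everything else, including correctly restricting the uniqueness argument to end points (where $\delta(z_i,Tz_i)=0$ is available, unlike for mere fixed points), is sound.
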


\section{Existence results for Multivalued Enriched Mappings}

In this section, we shall prove the existence theorems for the fixed points
of multivalued enriched mappings.

\begin{theorem}
\label{theoenkannan} Let $(X,\left\Vert \cdot ,\cdot \right\Vert )$ be a
normed linear space and let $T:X\rightarrow CB(X)$ be a multivalued enriched
Kannan mapping. Then, $T$ has a fixed point.

\begin{proof}
The mapping $T_{\lambda }$ in Lemma \ref{L1} is a multivalued Kannan
mapping. Indeed, if we let $\lambda =\frac{1}{b+1}$, then we have%
\begin{equation*}
d(x,T_{\lambda }x)=\inf \{d(x,(1-\lambda )x+\lambda u):u\in Tx\}=(1-\lambda
)\inf \{d(x,u):u\in Tx\}=\lambda d(x,Tx),
\end{equation*}%
\begin{eqnarray*}
H(T_{\lambda }x,T_{\lambda }y) &=&\lambda H(Tx+bx,Ty+by) \\
&\leq &\lambda \theta \lbrack d(x,Tx)+d(y,Ty)] \\
&=&\frac{\lambda \theta }{\lambda }[d(x,T_{\lambda }x)+d(y,T_{\lambda }y)] \\
&=&\theta \lbrack d(x,T_{\lambda }x)+d(y,T_{\lambda }y)].
\end{eqnarray*}

Therefore, by Theorem 2.1 in \cite{damjan}, we can find a sequence $%
x_{n+1}\in T_{\lambda }x_{n}=\{(1-\lambda )x_{n}+u_{n}:u_{n}\in Tx_{n}\}$
which is convergent to a fixed point of $T_{\lambda }$.
\end{proof}
\end{theorem}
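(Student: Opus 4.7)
The plan is to reduce to a known multivalued Kannan fixed point theorem via the averaged operator $T_\lambda$ of Lemma~\ref{L1}. Since Lemma~\ref{L1} guarantees $Fix(T) = Fix(T_\lambda)$ for every $\lambda \in [0,1)$, it suffices to produce a fixed point of $T_\lambda$ for one judicious choice of $\lambda$; the enriched parameter $b$ dictates this choice.

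First I would set $\lambda = 1/(b+1) \in (0,1]$, so that $(1-\lambda)/\lambda = b$. Writing $T_\lambda x = \{(1-\lambda)x + \lambda u : u \in Tx\}$, and using that translation is an isometry in a normed space and that the Hausdorff distance scales linearly under scalar multiplication by $\lambda$, I would obtain
\begin{equation*}
d(x, T_\lambda x) = \lambda\, d(x, Tx) \qquad \text{and} \qquad H(T_\lambda x, T_\lambda y) = \lambda\, H(bx + Tx,\, by + Ty).
\end{equation*}
Substituting the enriched Kannan inequality into the right-hand side then yields
\begin{equation*}
H(T_\lambda x, T_\lambda y) \leq \lambda \theta [d(x,Tx) + d(y,Ty)] = \theta [d(x, T_\lambda x) + d(y, T_\lambda y)],
\end{equation*}
which is exactly the multivalued Kannan condition with the same constant $\theta \in [0, 1/2)$.

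Having verified that $T_\lambda$ is multivalued Kannan, I would invoke the existence theorem for such mappings from \cite{damjan} to produce $z \in Fix(T_\lambda)$, and conclude via Lemma~\ref{L1} that $z \in Fix(T)$ as well.

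The main obstacle, as I read the statement, is that completeness of $X$ is not assumed explicitly, whereas the appeal to \cite{damjan} requires a complete underlying metric space; I would either add a Banach-space hypothesis or argue that completeness is intended. If instead one wants a self-contained proof, the iteration $x_{n+1} \in T_\lambda x_n$, with iterates selected via Lemma~\ref{kamran} applied to a suitable $\mu > 1$, can be shown to be Cauchy directly from the Kannan-type inequality on $T_\lambda$ together with the usual telescoping trick $d(x,Tx) \leq d(x,y) + d(y,Ty) + H(Ty,Tx)$. A minor technical point is confirming that $T_\lambda x \in CB(X)$ whenever $Tx \in CB(X)$, which is needed so that the Hausdorff-distance machinery applies to $T_\lambda$; this is immediate because the affine map $u \mapsto (1-\lambda)x + \lambda u$ is a homeomorphism preserving closedness and boundedness.
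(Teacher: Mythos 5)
Your proposal follows the paper's proof exactly: choose $\lambda=\tfrac{1}{b+1}$, verify $d(x,T_{\lambda}x)=\lambda d(x,Tx)$ and $H(T_{\lambda}x,T_{\lambda}y)\leq\theta[d(x,T_{\lambda}x)+d(y,T_{\lambda}y)]$, and invoke the multivalued Kannan theorem of \cite{damjan} together with Lemma~\ref{L1}. Your observation that completeness of $X$ is needed (the theorem as stated only assumes a normed linear space) is a valid criticism of the paper's hypotheses that its own proof silently overlooks.
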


\begin{theorem}
\label{theocater} Let $(X,\left\Vert \cdot ,\cdot \right\Vert )$ be a normed
linear space and let $T:X\rightarrow CB(X)$ be a multivalued enriched
Chatterjea mapping. Then, $T$ has a fixed point.

\begin{proof}
The mapping $T_{\lambda }$ in Lemma \ref{L1} is a multivalued Chatterjea
mapping. Indeed, if we let $\lambda =\frac{1}{b+1}$, then we have 
\begin{eqnarray*}
H(T_{\lambda }x,T_{\lambda }y) &=&\lambda H(Tx+bx,Ty+by) \\
&\leq &\lambda \theta \lbrack d((b+1)x,by+Ty)+d((b+1)y,bx+Tx] \\
&=&\lambda \theta (b+1)[d(x,\frac{b}{b+1}y+\frac{1}{b+1}Ty)+d(y,\frac{b}{b+1}%
x+\frac{1}{b+1}Tx] \\
&=&\theta \lbrack d(x,T_{\lambda }y)+d(y,T_{\lambda }x]\text{.}
\end{eqnarray*}

Therefore, by Theorem 15 in \cite{Neam}, we can find a sequence $x_{n+1}\in
T_{\lambda }x_{n}=\{(1-\lambda )x_{n}+u_{n}:u_{n}\in Tx_{n}\}$ which is
convergent to a fixed point of $T_{\lambda }$.
\end{proof}
\end{theorem}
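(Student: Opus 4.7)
The plan is to mimic the strategy used in Theorem~\ref{theoenkannan}, reducing the enriched problem to the classical multivalued Chatterjea setting by passing to the averaged operator $T_{\lambda}$ supplied by Lemma~\ref{L1}. The natural choice is $\lambda = 1/(b+1)$, so that $1-\lambda = b/(b+1)$; this yields the crucial identification
\begin{equation*}
T_{\lambda} x = \tfrac{1}{b+1}\bigl(bx + Tx\bigr),
\end{equation*}
and, by the positive homogeneity of the Hausdorff distance on subsets of a normed space,
\begin{equation*}
H(T_{\lambda} x, T_{\lambda} y) = \lambda\, H(bx+Tx,\, by+Ty).
\end{equation*}

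Next I would plug in the enriched Chatterjea hypothesis on the right-hand side and translate the ``$(b+1)$-scaled'' point-to-set terms appearing there into distances involving $T_{\lambda}$. The key identity is
\begin{equation*}
d\bigl((b+1)x,\, by+Ty\bigr) = \inf_{u\in Ty}\bigl\|(b+1)x - by - u\bigr\| = (b+1)\, d\bigl(x, T_{\lambda} y\bigr),
\end{equation*}
and symmetrically $d((b+1)y, bx+Tx) = (b+1)\, d(y, T_{\lambda} x)$. Chaining these together gives
\begin{equation*}
H(T_{\lambda} x, T_{\lambda} y) \le \lambda\theta(b+1)\bigl[d(x,T_{\lambda}y) + d(y,T_{\lambda}x)\bigr] = \theta\bigl[d(x,T_{\lambda}y) + d(y,T_{\lambda}x)\bigr],
\end{equation*}
since $\lambda(b+1)=1$. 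Hence $T_{\lambda}$ is an ordinary multivalued Chatterjea mapping with the same contraction constant $\theta \in [0,1/2)$.

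Finally I would invoke the known fixed point theorem for multivalued Chatterjea mappings (Theorem~15 in \cite{Neam}) to produce a fixed point of $T_{\lambda}$, and conclude via Lemma~\ref{L1} that this point is simultaneously a fixed point of $T$. The main obstacle I anticipate is purely algebraic bookkeeping: the enriched inequality couples the averaged set $bx+Tx$ with the unaveraged scaled point $(b+1)x$, and one must verify that the factor $b+1$ extracted from the point-to-set terms cancels exactly against the $\lambda$ produced by the homogeneity of $H$, leaving the Chatterjea constant unchanged. A secondary subtlety is that the cited Chatterjea theorem presumably requires completeness, so the underlying normed space should implicitly be taken to be a Banach space.
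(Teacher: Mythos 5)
Your proposal is correct and follows essentially the same route as the paper: choose $\lambda=\tfrac{1}{b+1}$, use homogeneity of $H$ and the identity $d((b+1)x,by+Ty)=(b+1)\,d(x,T_{\lambda}y)$ to show $T_{\lambda}$ is an ordinary multivalued Chatterjea mapping with the same constant $\theta$, then apply Theorem~15 of \cite{Neam} and Lemma~\ref{L1}. Your closing remark about completeness is a fair point the paper glosses over, but otherwise the two arguments coincide.
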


\begin{theorem}
\label{theorus} Let $(X,\left\Vert \cdot ,\cdot \right\Vert )$ be a linear
normed space and let $T:X\rightarrow CB(X)$ be a multivalued enriched Ciri%
\'{c}-Reich-Rus mapping. Then, $T$ has a fixed point.

\begin{proof}
The mapping $T_{\lambda }$ in Lemma \ref{L1} is a multivalued Ciri\'{c}%
-Reich-Rus mapping. Indeed, if we let $\lambda =\frac{1}{c+1}$, then we have 
\begin{eqnarray*}
H(T_{\lambda }x,T_{\lambda }y) &=&\lambda H(Tx+cx,Ty+cy) \\
&\leq &\lambda \lbrack ad(x,y)+b[d(x,Tx)+d(y,Ty)]] \\
&\leq &\lambda ad(x,y)+\frac{\lambda b}{\lambda }[d(x,T_{\lambda
}x)+d(y,T_{\lambda }y)] \\
&\leq &ad(x,y)+b[d(x,T_{\lambda }x)+d(y,T_{\lambda }y)].
\end{eqnarray*}

Set $\delta =\frac{a+b}{1-b}$. Then, we get 
\begin{eqnarray*}
H(T_{\lambda }x,T_{\lambda }y) &\leq &ad(x,y)+b[d(x,T_{\lambda
}x)+d(y,T_{\lambda }y)]. \\
&\leq &ad(x,y)+b[d(x,y)+d(y,T_{\lambda }x)+d(y,T_{\lambda }x)+H(T_{\lambda
}x,T_{\lambda }y)]. \\
&\leq &(a+b)d(x,y)+2bd(y,T_{\lambda }x)+bH(T_{\lambda }x,T_{\lambda }y),
\end{eqnarray*}%
which implies

\begin{equation*}
H(T_{\lambda }x,T_{\lambda }y)\leq \delta d(x,y)+2\delta d(y,T_{\lambda }x)%
\text{.}
\end{equation*}%
Similary, we obtain 
\begin{equation*}
H(T_{\lambda }x,T_{\lambda }y)\leq \delta d(x,y)+2\delta d(x,T_{\lambda }y),
\end{equation*}%
and 
\begin{equation*}
H(T_{\lambda }x,T_{\lambda }y)\leq \delta d(x,y)+2\delta d(x,T_{\lambda }x).
\end{equation*}

By Theorem 9 in \cite{maryam}, we can find a sequence $x_{n+1}\in T_{\lambda
}x_{n}=\{(1-\lambda )x_{n}+u_{n}:u_{n}\in Tx_{n}\}$ which is convergent to a
fixed point of $T_{\lambda }$.
\end{proof}
\end{theorem}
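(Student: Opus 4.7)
The plan is to mimic the enrichment reduction used in Theorems \ref{theoenkannan} and \ref{theocater}: I would produce a fixed point of $T$ by showing that the averaging operator $T_{\lambda}$ of Lemma \ref{L1} is an ordinary (non-enriched) multivalued Ciri\'{c}-Reich-Rus mapping, and then invoke a known existence result for that class.

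First I would choose $\lambda = \frac{1}{c+1}$, the unique value that rewrites $(1-\lambda)x + \lambda u$ as $\frac{1}{c+1}(cx+u)$, so that $T_{\lambda}x = \frac{1}{c+1}(cx + Tx)$. Using positive homogeneity of the Hausdorff metric, $H(T_{\lambda}x, T_{\lambda}y) = \lambda H(cx+Tx, cy+Ty)$, and the enriched Ciri\'{c}-Reich-Rus inequality bounds this by $\lambda a d(x,y) + \lambda b[d(x,Tx) + d(y,Ty)]$. Combining this with the identity $d(x, T_{\lambda}x) = \lambda d(x,Tx)$ (computed exactly as in the proof of Theorem \ref{theoenkannan}) collapses the right-hand side to $a d(x,y) + b[d(x, T_{\lambda}x) + d(y, T_{\lambda}y)]$, establishing that $T_{\lambda}$ satisfies condition (viii).

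Next, to align with the hypotheses of Theorem 9 in \cite{maryam}, I would derive three auxiliary inequalities of the form $H(T_{\lambda}x, T_{\lambda}y) \leq \delta d(x,y) + 2\delta D$, with $D$ being in turn $d(y, T_{\lambda}x)$, $d(x, T_{\lambda}y)$, and $d(x, T_{\lambda}x)$, where $\delta = \frac{a+b}{1-b}$. These follow from a triangle-inequality estimate: bound $d(x, T_{\lambda}x)$ by $d(x,y) + d(y, T_{\lambda}x)$ and bound $d(y, T_{\lambda}y)$ by $d(y, T_{\lambda}x) + H(T_{\lambda}x, T_{\lambda}y)$, then absorb the $H$ term on the left (which requires $b<1$). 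The one thing to verify carefully is that $\delta \in [0,1)$; this is precisely where the assumption $a+2b<1$ is used.

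Finally, Theorem 9 of \cite{maryam} then produces a sequence $x_{n+1} \in T_{\lambda}x_n$ converging to some $z \in Fix(T_{\lambda})$, and Lemma \ref{L1} gives $Fix(T_{\lambda}) = Fix(T)$, so $z$ is a fixed point of $T$. I expect the only real bookkeeping obstacle to be the derivation of the three symmetric inequalities and the verification that $\delta<1$; no new idea beyond the enrichment technique already employed in the Kannan and Chatterjea cases should be needed.
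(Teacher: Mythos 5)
Your proposal follows essentially the same route as the paper's own proof: the same choice $\lambda=\frac{1}{c+1}$, the same verification that $T_{\lambda}$ is a multivalued Ciri\'{c}-Reich-Rus mapping via $d(x,T_{\lambda}x)=\lambda d(x,Tx)$, the same three auxiliary inequalities with $\delta=\frac{a+b}{1-b}$ (where $a+2b<1$ gives $\delta<1$), and the same appeal to Theorem 9 of \cite{maryam} together with Lemma \ref{L1}. It is correct and matches the paper's argument.
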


\begin{remark}
The single-valued cases of  Theorems \ref{theoenkannan},\ref{theocater},\ref%
{theorus} can be proved easily similar to the proof of Theorems \ref%
{theoenkannan},\ref{theocater},\ref{theorus}. In fact, for the single-valued
cases, it is enough to show that $T_{\lambda }=(1-\lambda )I+\lambda T$ is
satisfied the conditions of the non-enriched version of $T$. Then the proof
will be a repetition of the proof of the non-enriched version of $T$.
\end{remark}

\section{Data dependence results for multivalued mappings}

\begin{theorem}
\label{theokannanstable}Let $(X,\left\Vert \cdot ,\cdot \right\Vert )$ be a
normed linear space with $d(x,y)=\left\Vert x-y\right\Vert $ for every $%
x,y\in X$ and $T:X\rightarrow CB(X)$ be a multivalued enriched Kannan
mapping. If $\ S:X\rightarrow CB(X)$ with $F(S)\neq \emptyset $ and 
\begin{equation*}
H(Tx,Sx)\leq \gamma ,\forall x\in X
\end{equation*}%
$\ $ is satisfied. Then, we can find a $z\in F(T)$ such that \ 
\begin{equation*}
d(z,z^{\ast })\leq \frac{1}{1-k}\lambda \gamma \text{, that is, }%
\sup_{z^{\ast }\in F(S)}d(z^{\ast },F(T))\leq \frac{1}{1-k}\lambda \gamma 
\text{,}
\end{equation*}%
where $k=\frac{\theta }{1-\theta }$ and $z^{\ast }\in F(S)$ is an arbitrary
fixed point. If $S$ is a multivalued enriched Kannan mapping, then we have 
\begin{equation*}
H(F(S),F(T))\leq \frac{1}{1-k}\lambda \gamma .
\end{equation*}

\begin{proof}
Since $T_{\lambda }$ is multivalued Kannan mapping,%
\begin{equation*}
d(u,T_{\lambda }u)\leq kd(x,T_{\lambda }x)\leq kd(x,u)\text{ if }u\in
T_{\lambda }x\text{,}
\end{equation*}%
where $k=\frac{\theta }{1-\theta }<1$. Let $\mu >1$ such that $k\mu <1.$ Let 
$x_{0}=z^{\ast }\in F(S)$ and choose $x_{n+1}\in T_{\lambda }x_{n}$ with $%
d(x_{n},T_{\lambda }x_{n})\mu \geq d(x_{n},x_{n+1})$ for all $n\geq 0$ by
Lemma \ref{kamran}. Then, we have 
\begin{eqnarray*}
d(x_{n+1},x_{n+2}) &\leq &\mu d(x_{n+1},T_{\lambda }x_{n+1}) \\
&\leq &k\mu d(x_{n},T_{\lambda }x_{n}) \\
&\leq &k\mu d(x_{n},x_{n+1}) \\
&\leq &(k\mu )^{2}d(x_{n-1},x_{n}) \\
&&.. \\
&\leq &(k\mu )^{n+1}d(x_{0},x_{1}).
\end{eqnarray*}%
which yields that $\{x_{n}\}$ is a Cauchy sequence and $\lim_{n\rightarrow
\infty }$ $d(x_{n},T_{\lambda }x_{n})=\lim_{n\rightarrow \infty
}d(x_{n},x_{n+1})=0$. Therefore, there exists a $z\in X$ such that $%
\lim_{n\rightarrow \infty }$ $x_{n}=z$. Since 
\begin{equation*}
d(x_{n+1},T_{\lambda }z)\leq H(T_{\lambda }x_{n},T_{\lambda }z)\leq \theta
\lbrack d(x_{n},T_{\lambda }x_{n})+d(z,T_{\lambda }z)]\text{,}
\end{equation*}%
$z\in F(T)$. Now, since $x_{1}=(1-\lambda )x_{0}+\lambda u_{0}\in T_{\lambda
}x_{0}$ in which $u_{0}\in Tx_{0}$, we have 
\begin{eqnarray*}
d(x_{0},x_{1}) &\leq &\mu d(x_{0},T_{\lambda }x_{0}) \\
&=&\mu \inf_{u_{0}\in Tx_{0}}d(x_{0},(1-\lambda )x_{0}+\lambda u_{0}) \\
&=&\mu \inf_{u_{0}\in Tx_{0}}\lambda d(x_{0},u_{0}) \\
&=&\mu \lambda d(x_{0},Tx_{0}) \\
&\leq &\mu \lambda H(Sx_{0},Tx_{0}) \\
&\leq &\mu \lambda \gamma \text{,}
\end{eqnarray*}%
\begin{eqnarray*}
d(x_{n},x_{0}) &\leq &\sum\limits_{m=0}^{n-1}d(x_{m},x_{m+1}) \\
&\leq &\sum\limits_{m=0}^{n-1}(k\mu )^{m}d(x_{1},x_{0}) \\
&\leq &\sum\limits_{m=0}^{n-1}(k\mu )^{m}\mu \lambda \gamma  \\
&=&\frac{1}{1-k\mu }\mu \lambda \gamma \text{.}
\end{eqnarray*}%
If we take limit as $n\rightarrow \infty $ on both sides of the above
inequality, then we obtain 
\begin{equation*}
d(z,x_{0})\leq \frac{1}{1-k\mu }\mu \lambda \gamma ,
\end{equation*}%
which implies that%
\begin{equation*}
\sup_{z^{\ast }\in F(S)}d(F(T),z^{\ast })\leq \frac{1}{1-k}\lambda \gamma 
\end{equation*}%
as $\mu \rightarrow 1^{+}$.

If $S$ is a multivalued enriched Kannan mapping, then, for an arbitrary $%
z\in F(T)$, we can find a $z^{\ast }\in F(S)$ satisfying%
\begin{equation*}
\sup_{z\in F(T)}d(F(S),z)\leq \frac{1}{1-k}\lambda \gamma .
\end{equation*}

Hence,%
\begin{equation*}
H(F(S),F(T))\leq \frac{1}{1-k}\lambda \gamma .
\end{equation*}
\end{proof}
\end{theorem}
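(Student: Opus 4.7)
The plan is to reduce the problem to the non-enriched Kannan setting via the averaging operator $T_\lambda$ of Lemma \ref{L1}, then build a Picard-type iterate that starts at an arbitrary fixed point $z^{\ast}$ of $S$ and estimate its displacement by a geometric series. First I would set $\lambda = 1/(b+1)$ and recall from the proof of Theorem \ref{theoenkannan} that $T_\lambda$ is then a multivalued Kannan mapping with constant $\theta$. A standard rearrangement of the Kannan inequality shows that whenever $u \in T_\lambda x$ one has $d(u, T_\lambda u) \leq k\, d(x, u)$ with $k = \theta/(1-\theta) < 1$.

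Next, pick $\mu > 1$ with $k\mu < 1$ and, invoking Lemma \ref{kamran}, construct inductively $x_{n+1} \in T_\lambda x_n$ starting from $x_0 = z^{\ast}$ so that $d(x_n, x_{n+1}) \leq \mu\, d(x_n, T_\lambda x_n)$. Iterating the Kannan-type estimate above yields $d(x_{n+1}, x_{n+2}) \leq (k\mu)^{n+1} d(x_0, x_1)$, hence $(x_n)$ is Cauchy and converges to some $z \in X$. The usual slot argument on $d(x_{n+1}, T_\lambda z) \leq H(T_\lambda x_n, T_\lambda z)$ forces $z \in T_\lambda z$, and Lemma \ref{L1} then gives $z \in F(T)$.

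The quantitative heart of the argument is to bound $d(x_0, x_1)$ in terms of $\gamma$. Because $z^{\ast} \in Sz^{\ast}$, we have $d(x_0, Tx_0) \leq H(Sx_0, Tx_0) \leq \gamma$, and the identity $d(x_0, T_\lambda x_0) = \lambda d(x_0, Tx_0)$ (already established inside the proof of Theorem \ref{theoenkannan}) gives $d(x_0, x_1) \leq \mu \lambda \gamma$. Summing the geometric series produces $d(x_n, x_0) \leq \mu \lambda \gamma / (1 - k\mu)$; letting $n \to \infty$ and then $\mu \to 1^{+}$ delivers $d(z, z^{\ast}) \leq \lambda\gamma/(1-k)$. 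Since $z^{\ast} \in F(S)$ was arbitrary, this is exactly the one-sided estimate $\sup_{z^{\ast} \in F(S)} d(z^{\ast}, F(T)) \leq \lambda\gamma/(1-k)$. When $S$ is itself a multivalued enriched Kannan mapping, the identical argument with $S$ and $T$ swapped yields the reverse one-sided bound, and combining the two gives the Hausdorff inequality $H(F(S), F(T)) \leq \lambda\gamma/(1-k)$.

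The main obstacle I anticipate is the careful bookkeeping that ties the enrichment factor $\lambda$ to $\gamma$, in particular the passage from the hypothesis $H(Tx, Sx) \leq \gamma$ to a usable bound on $d(x_0, T_\lambda x_0)$; this relies on the fact that $z^{\ast}$ is a genuine element of $Sz^{\ast}$, which is legitimate since $S$ takes values in $CB(X)$. Beyond that the proof is a routine Nadler-style iteration with the $\mu$-selection of Lemma \ref{kamran} and a final passage $\mu \to 1^{+}$ to absorb the selection slack.
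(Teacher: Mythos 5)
Your proposal is correct and follows essentially the same route as the paper: reduce to the Kannan property of $T_{\lambda}$, use Lemma \ref{kamran} to build the $\mu$-selected iterates starting at $z^{\ast}$, bound $d(x_{0},x_{1})\leq\mu\lambda\gamma$ via $d(x_{0},Tx_{0})\leq H(Sx_{0},Tx_{0})$, sum the geometric series, and let $\mu\rightarrow 1^{+}$. The symmetric swap of $S$ and $T$ for the Hausdorff estimate is also exactly the paper's argument.
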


\begin{theorem}
Let $(X,\left\Vert \cdot ,\cdot \right\Vert )$ be a normed linear space with 
$d(x,y)=\left\Vert x-y\right\Vert $ for every $x,y\in X$ and $T:X\rightarrow
CB(X)$ be a multivalued enriched Chatterjea mapping. If $\ S:X\rightarrow
CB(X)$ with $F(S)\neq \emptyset $ and 
\begin{equation*}
H(Tx,Sx)\leq \gamma ,\forall x\in X
\end{equation*}%
$\ $ is satisfied. Then, we can find a $z\in F(T)$ such that \ 
\begin{equation*}
d(z,z^{\ast })\leq \frac{1}{1-k}\lambda \gamma \text{, that is, }%
\sup_{z^{\ast }\in F(S)}d(z^{\ast },F(T))\leq \frac{1}{1-k}\lambda \gamma ,
\end{equation*}%
where $k=\frac{\theta }{1-\theta }$ and $z^{\ast }\in F(S)$ \ is an
arbitrary fixed point. If $S$ is a multivalued enriched Chatterjea mapping,
then we have%
\begin{equation*}
H(F(S),F(T))\leq \frac{1}{1-k}\lambda \gamma \text{.}
\end{equation*}

\begin{proof}
Since $T_{\lambda }$ is multivalued Chatterjea mapping,%
\begin{equation*}
d(u,T_{\lambda }u)\leq \frac{\theta }{1-\theta }d(x,T_{\lambda }x)\leq \frac{%
\theta }{1-\theta }d(x,u)\text{ if }u\in T_{\lambda }x\text{.}
\end{equation*}

The rest of \ the proof can be done as in the proof of Theorem \ref%
{theokannanstable}.
\end{proof}
\end{theorem}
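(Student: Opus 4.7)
The plan is to mirror the proof of Theorem \ref{theokannanstable} almost verbatim, exploiting the fact that we already did the enrichment step inside Theorem \ref{theocater}. With the choice $\lambda = 1/(b+1)$, the averaged operator $T_\lambda x = \{(1-\lambda)x + \lambda u : u \in Tx\}$ is a multivalued Chatterjea mapping with the same constant $\theta \in [0,1/2)$, and Lemma \ref{L1} gives $F(T) = F(T_\lambda)$. So it suffices to do all estimates at the level of $T_\lambda$ and then translate back.

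The first and only genuinely Chatterjea-specific step I would establish is the orbital estimate: for every $x \in X$ and every $u \in T_\lambda x$,
\begin{equation*}
d(u, T_\lambda u) \leq k\, d(x, T_\lambda x) \leq k\, d(x, u), \qquad k := \tfrac{\theta}{1-\theta} < 1.
\end{equation*}
The second inequality is immediate because $u \in T_\lambda x$ implies $d(x, T_\lambda x) \leq d(x, u)$. For the first, I would apply the Chatterjea inequality to the pair $(x, u)$: since $d(u, T_\lambda x) = 0$, we get $d(u, T_\lambda u) \leq H(T_\lambda x, T_\lambda u) \leq \theta\, d(x, T_\lambda u)$, and then $d(x, T_\lambda u) \leq d(x, T_\lambda x) + H(T_\lambda x, T_\lambda u)$ yields $(1-\theta)\, d(x, T_\lambda u) \leq d(x, T_\lambda x)$, whence the claim. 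This two-step detour is the main obstacle: the Kannan inequality gave the bound in a single line, whereas the Chatterjea inequality forces one to route through $d(x, T_\lambda u)$.

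With the orbital estimate in hand, the remainder is a faithful copy of Theorem \ref{theokannanstable}. Pick $\mu > 1$ with $k\mu < 1$, set $x_0 := z^* \in F(S)$, and use Lemma \ref{kamran} to build a sequence $x_{n+1} \in T_\lambda x_n$ with $d(x_n, x_{n+1}) \leq \mu\, d(x_n, T_\lambda x_n)$. Iterating the orbital estimate gives $d(x_{n+1}, x_{n+2}) \leq (k\mu)^{n+1} d(x_0, x_1)$, so $\{x_n\}$ is Cauchy; its limit $z$ lies in $F(T_\lambda) = F(T)$ by the usual argument $d(x_{n+1}, T_\lambda z) \leq H(T_\lambda x_n, T_\lambda z) \to 0$. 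For the quantitative estimate, since $x_1 = (1-\lambda)x_0 + \lambda u_0$ with $u_0 \in Tx_0$ and $z^* \in Sx_0$, the hypothesis $H(Tx, Sx) \leq \gamma$ yields $d(x_0, x_1) \leq \mu\lambda\, d(x_0, Tx_0) \leq \mu\lambda\, H(Sx_0, Tx_0) \leq \mu\lambda\gamma$. Summing the geometric series bounds $d(x_n, x_0) \leq \frac{\mu\lambda\gamma}{1-k\mu}$; letting $n \to \infty$ and then $\mu \to 1^+$ produces $d(z, z^*) \leq \frac{\lambda\gamma}{1-k}$, hence $\sup_{z^* \in F(S)} d(z^*, F(T)) \leq \frac{\lambda\gamma}{1-k}$. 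When $S$ is itself a multivalued enriched Chatterjea mapping, exchanging the roles of $T$ and $S$ yields the symmetric supremum bound, and combining both inequalities gives $H(F(S), F(T)) \leq \frac{\lambda\gamma}{1-k}$.
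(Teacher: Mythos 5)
Your proposal is correct and follows essentially the same route as the paper: establish the orbital estimate $d(u,T_{\lambda}u)\leq \frac{\theta}{1-\theta}d(x,T_{\lambda}x)\leq \frac{\theta}{1-\theta}d(x,u)$ for $u\in T_{\lambda}x$ and then repeat the Kannan data dependence argument verbatim. In fact you supply the two-step derivation of that estimate (via $d(x,T_{\lambda}u)\leq d(x,T_{\lambda}x)+H(T_{\lambda}x,T_{\lambda}u)$), which the paper merely asserts, so your write-up is, if anything, more complete than the original.
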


\begin{theorem}
Let $(X,\left\Vert \cdot ,\cdot \right\Vert )$ be a normed linear space with 
$d(x,y)=\left\Vert x-y\right\Vert $ for every $x,y\in X$ and $T:X\rightarrow
CB(X)$ be a multivalued enriched Ciri\'{c}-Reich-Rus mapping. If $\
S:X\rightarrow CB(X)$ with $F(S)\neq \emptyset $ and 
\begin{equation*}
H(Tx,Sx)\leq \gamma ,\forall x\in X
\end{equation*}%
$\ $ is satisfied. Then, we can find a $z\in F(T)$ such that \ 
\begin{equation*}
d(z,z^{\ast })\leq \frac{1}{1-k}\lambda \gamma ,\text{ that is, }%
\sup_{z^{\ast }\in F(S)}d(z^{\ast },F(T))\leq \frac{1}{1-k}\lambda \gamma ,
\end{equation*}%
where $k=\frac{\theta }{1-\theta }$ and $z^{\ast }\in F(S)$ \ is an
arbitrary fixed point. If $S$ is a multivalued enriched Ciri\'{c}-Reich-Rus
mapping, then we have 
\begin{equation*}
H(F(S),F(T))\leq \frac{1}{1-k}\lambda \gamma .
\end{equation*}

\begin{proof}
Since $T_{\lambda }$ is multivalued Ciri\'{c}-Reich-Rus mapping, 
\begin{equation*}
H(T_{\lambda }x,T_{\lambda }y)\leq \delta d(x,y)+2\delta d(y,T_{\lambda }x)
\end{equation*}%
for\ $\delta =\frac{a+b}{1-b}<1$. If $u\in T_{\lambda }x$, then we have%
\begin{equation*}
d(u,T_{\lambda }u)\leq H(T_{\lambda }x,T_{\lambda }u)\leq \ \delta
d(x,u)+2\delta d(u,T_{\lambda }x)=\ \delta d(x,u)\text{.}
\end{equation*}

The rest of \ the proof can be done as in the proof of Theorem \ref%
{theokannanstable}.
\end{proof}
\end{theorem}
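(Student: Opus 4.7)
The plan is to mimic the proof of Theorem \ref{theokannanstable}, reducing the problem to the averaged operator $T_\lambda$ with $\lambda = \frac{1}{c+1}$ and applying Lemma \ref{L1} and Lemma \ref{kamran}. First, I would invoke the calculation from the proof of Theorem \ref{theorus} to obtain the key inequality
\begin{equation*}
H(T_\lambda x, T_\lambda y) \leq \delta d(x,y) + 2\delta d(y, T_\lambda x), \qquad \delta = \tfrac{a+b}{1-b} < 1,
\end{equation*}
the bound $\delta < 1$ being guaranteed by $a+2b<1$. The contraction-like estimate that drives everything is then immediate: if $u \in T_\lambda x$, then $d(u, T_\lambda x) = 0$, so
\begin{equation*}
d(u, T_\lambda u) \leq H(T_\lambda x, T_\lambda u) \leq \delta d(x,u) + 2\delta d(u, T_\lambda x) = \delta d(x,u).
\end{equation*}
(In the theorem statement the symbol $k$ plays the role of this $\delta$; the CRR-class has no $\theta$, so one reads $k = \delta$.)

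Next, I would choose $\mu > 1$ with $\delta \mu < 1$ and set $x_0 = z^* \in F(S)$. Using Lemma \ref{kamran}, I would pick $x_{n+1} \in T_\lambda x_n$ with $d(x_n, x_{n+1}) \leq \mu\, d(x_n, T_\lambda x_n)$. The contraction estimate above then gives the geometric bound
\begin{equation*}
d(x_{n+1}, x_{n+2}) \leq \mu\, d(x_{n+1}, T_\lambda x_{n+1}) \leq \mu\delta\, d(x_n, x_{n+1}) \leq (\delta\mu)^{n+1} d(x_0, x_1),
\end{equation*}
so $\{x_n\}$ is Cauchy, and its limit $z$ satisfies $d(z, T_\lambda z) = 0$; since the value set is closed, $z \in F(T_\lambda) = F(T)$ by Lemma \ref{L1}.

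The only place the hypothesis on $S$ enters is in estimating $d(x_0, x_1)$. Because $x_0 \in Sx_0$, we have $d(x_0, Tx_0) \leq H(Sx_0, Tx_0) \leq \gamma$, whence
\begin{equation*}
d(x_0, x_1) \leq \mu\, d(x_0, T_\lambda x_0) = \mu\lambda\, d(x_0, Tx_0) \leq \mu\lambda\gamma.
\end{equation*}
Summing the telescoping series yields $d(x_n, x_0) \leq \frac{\mu\lambda\gamma}{1 - \delta\mu}$; letting $n\to\infty$ and then $\mu \to 1^+$ produces $d(z, z^*) \leq \frac{\lambda\gamma}{1-\delta}$, which is the claimed inequality. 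For the second assertion, symmetry of the roles of $T$ and $S$ (when both are enriched CRR) gives the matching bound in the other direction and hence the Hausdorff estimate $H(F(S), F(T)) \leq \frac{1}{1-k}\lambda\gamma$.

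The main (minor) obstacle is establishing the contraction-like inequality $d(u, T_\lambda u) \leq \delta d(x,u)$ for $u \in T_\lambda x$; once that is in hand, everything else is a mechanical repetition of the Kannan argument. One should also be careful to reconcile the notation: the constant $k$ appearing in the theorem statement refers to $\delta = \frac{a+b}{1-b}$ rather than the $\frac{\theta}{1-\theta}$ of the Kannan/Chatterjea cases, since the enriched CRR class is parametrised by $(a,b,c)$.
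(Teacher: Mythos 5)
Your proposal is correct and follows essentially the same route as the paper: both derive the almost-contraction inequality $H(T_{\lambda }x,T_{\lambda }y)\leq \delta d(x,y)+2\delta d(y,T_{\lambda }x)$ with $\delta =\frac{a+b}{1-b}<1$ from the proof of Theorem \ref{theorus}, deduce $d(u,T_{\lambda }u)\leq \delta d(x,u)$ for $u\in T_{\lambda }x$, and then repeat the iteration and estimation scheme of Theorem \ref{theokannanstable} verbatim (with $\delta$ in place of $k=\frac{\theta }{1-\theta }$). Your observation that the constant $k$ in the statement must be read as $\delta$, since the enriched Ciri\'{c}-Reich-Rus class carries no parameter $\theta$, correctly flags a notational slip in the paper.
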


\begin{theorem}
Let $(X,\left\Vert \cdot ,\cdot \right\Vert )$ be a normed linear space with 
$d(x,y)=\left\Vert x-y\right\Vert $ for every $x,y\in X$ and $T:X\rightarrow
CB(X)$ be a multivalued G\'{o}rnicki mapping with $M<\frac{1}{3}$. If $\
S:X\rightarrow CB(X)$ with $F(S)\neq \emptyset $ and 
\begin{equation*}
H(Tx,Sx)\leq \gamma ,\forall x\in X
\end{equation*}%
$\ $ is satisfied. Then, we can find a $z\in F(T)$ such that \ 
\begin{equation*}
d(z,z^{\ast })\leq \frac{1}{1-k}\lambda \gamma ,\text{ that is, }%
\sup_{z^{\ast }\in F(S)}d(z^{\ast },F(T))\leq \frac{1}{1-k}\lambda \gamma ,
\end{equation*}%
where $k=\frac{\theta }{1-\theta }$ and $z^{\ast }\in F(S)$ \ is an
arbitrary fixed point. If $S$ is a multivalued G\'{o}rnicki mapping, then we
obtain 
\begin{equation*}
H(F(S),F(T))\leq \frac{1}{1-k}\lambda \gamma .
\end{equation*}

\begin{proof}
Since $T$ is multivalued G\'{o}rnicki mapping,%
\begin{eqnarray*}
d(u,Tu) &\leq &H(Tx,Tu) \\
&\leq &M[d(x,Tx)+d(u,Tu)+d(x,u)] \\
&\leq &M[d(x,u)+d(u,Tu)+d(x,u)]\text{ if }u\in Tx\text{,}
\end{eqnarray*}%
which implies that%
\begin{equation*}
d(u,Tu)\leq \frac{2M}{1-M}d(x,u)\text{.}
\end{equation*}

The rest of \ the proof can be done as in\ the proof of Theorem \ref%
{theokannanstable}.
\end{proof}
\end{theorem}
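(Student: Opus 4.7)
The plan is to mirror the argument of Theorem \ref{theokannanstable}, with $T$ itself (rather than an averaged operator $T_{\lambda}$) playing the role of the iterated map. The crucial preliminary step, already isolated in the excerpt, is a local-contraction estimate: if $u\in Tx$, then $d(x,Tx)\le d(x,u)$ and the Górnicki inequality give
\[
d(u,Tu)\le H(Tx,Tu)\le M\bigl[d(x,Tx)+d(u,Tu)+d(x,u)\bigr]\le M\bigl[2d(x,u)+d(u,Tu)\bigr],
\]
so that $d(u,Tu)\le k\,d(x,u)$ with $k=\frac{2M}{1-M}$. The hypothesis $M<\frac{1}{3}$ is precisely what makes $k<1$, which is what the whole iteration scheme will hinge on.

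With this estimate in hand, I fix $\mu>1$ small enough that $k\mu<1$, set $x_0=z^{\ast}\in F(S)$, and apply Lemma \ref{kamran} to choose $x_{n+1}\in Tx_n$ with $d(x_n,x_{n+1})\le\mu\,d(x_n,Tx_n)$ for every $n\ge 0$. Combining this with the local contraction gives
\[
d(x_{n+1},x_{n+2})\le\mu\,d(x_{n+1},Tx_{n+1})\le k\mu\,d(x_n,x_{n+1}),
\]
so by induction $d(x_n,x_{n+1})\le(k\mu)^n d(x_0,x_1)$ and $\{x_n\}$ is Cauchy with some limit $z\in X$. For the initial displacement, since $x_0\in Sx_0$ gives $d(x_0,Sx_0)=0$, one has
\[
d(x_0,x_1)\le\mu\,d(x_0,Tx_0)\le\mu\,H(Sx_0,Tx_0)\le\mu\gamma.
\]

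To verify $z\in F(T)$ I apply the Górnicki inequality to $x_n$ and $z$:
\[
d(x_{n+1},Tz)\le H(Tx_n,Tz)\le M\bigl[d(x_n,Tx_n)+d(z,Tz)+d(x_n,z)\bigr];
\]
letting $n\to\infty$ yields $d(z,Tz)\le M\,d(z,Tz)$, so $z\in Tz$ (using $Tz\in CB(X)$). A telescoping estimate
\[
d(x_n,x_0)\le\sum_{m=0}^{n-1}(k\mu)^m d(x_0,x_1)\le\frac{\mu\gamma}{1-k\mu}
\]
followed by $n\to\infty$ and then $\mu\to 1^{+}$ gives $d(z,z^{\ast})\le\frac{\gamma}{1-k}$, which is the desired bound. (The factor $\lambda$ in the statement is an artifact carried over from the enriched-mapping theorems and effectively equals $1$ here, since no averaging operator appears.) When $S$ is also a multivalued Górnicki mapping, swapping the roles of $S$ and $T$ in the argument above yields the matching inequality, and hence the $H(F(S),F(T))$ bound.

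The main obstacle is conceptual rather than technical: one must notice that the $(a,b)$ structure appearing in the definition of a multivalued Górnicki mapping, which was essential for the existence Theorem \ref{gornickI}, plays no role here. Instead, the iteration has to be constructed freshly from the $H$-inequality alone via Lemma \ref{kamran}, and the hypothesis $M<\frac{1}{3}$ is needed precisely so that the derived local-contraction constant $k=\frac{2M}{1-M}$ is strictly less than $1$. Without it the geometric-series bound on $d(x_n,x_0)$ would diverge and the quantitative control of $d(z,z^{\ast})$ would be lost.
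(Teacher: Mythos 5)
Your proposal is correct and follows the same route as the paper: the paper's proof consists precisely of the local-contraction estimate $d(u,Tu)\leq \frac{2M}{1-M}\,d(x,u)$ for $u\in Tx$ and then defers to the argument of Theorem \ref{theokannanstable}, which you carry out in full with $T$ in place of $T_{\lambda}$. You also rightly observe that the constants $k=\frac{\theta}{1-\theta}$ and the factor $\lambda$ in the statement are artifacts of the enriched-mapping theorems and should read $k=\frac{2M}{1-M}$ (with $M<\frac{1}{3}$ giving $k<1$) and $\lambda=1$ here.
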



\begin{thebibliography}{99}
\bibitem{VB1} V. Berinde, M. P\u{a}curar, Approximating fixed points of
enriched contractions in banach spaces, J. Fixed Point Theory Appl. 22 (2)
(2020) 10, 38

\bibitem{VB2} V. Berinde, M. P\u{a}curar, Kannan's fixed point approximation
for solving split feasibility and variational inequality problems, J.
Comput. Appl. Math. 386 (2021) 113217

\bibitem{VB3} V. Berinde, Approximating fixed points of enriched
nonexpansive mappings by Krasnoselskij iteration in Hilbert spaces,
Carpathian J. Math. 35 (3) (2019) 293--304.

\bibitem{VB4} V. Berinde, Approximating fixed points of enriched
nonexpansive mappings in Banach spaces by using a retraction-displacement
condition, Carpathian J. Math. 36 (1) (2020) 27--34.

\bibitem{VB5} V. Berinde, Weak and strong convergence theorems for the
Krasnoselskij iterative algorithm in the class of enriched strictly
pseudocontractive operators, An. Univ. Vest Timi\c{s}. Ser. Mat. Inform.
2018, 56, 13--27.

\bibitem{VB6} V. Berinde, Approximating fixed points of enriched
nonexpansive mappings in Banach spaces by using a retraction-displacement
condition, Carpathian J. Math. 2020, 36, 27--34.

\bibitem{VB7} V. Berinde, M. P\u{a}curar, Existence and approximation of
fixed points of enriched contractions and enriched $\phi -$contractions,
Symmetry 2021, 13, 498.

\bibitem{VB8} V. Berinde, M. P\u{a}curar, Fixed points theorems for
unsaturated and saturated classes of contractive mappings in Banach spaces,
Symmetry 2021, 13, 713.

\bibitem{VB9} V. Berinde, M. P\u{a}curar, Fixed point theorems for enriched
Ciri\'{c}-Reich-Rus contractions in Banach spaces and convex metric spaces,
Carpathian J. Math. 2021, 37, 173--184.

\bibitem{VB10} M. Abbas, R. Anjum, V. Berinde, Enriched multivalued
contractions with applications to differential inclusions and dynamic
programming, Symmetry, 13(8), 1350.

\bibitem{VB11} V. Berinde, M. P\u{a}curar, Approximating fixed points of
enriched Chatterjea contractions by Krasnoselskij iterative method in Banach
spaces. arXiv preprint, arXiv:1909.03494..

\bibitem{Gornicki2} J. G\'{o}rnicki, R. K. Bisht, Around averaged mappings,
J. Fixed Point Theory Appl., 2021, 23:48.
https://doi.org/10.1007/s11784-021-00884-y

\bibitem{Suantai} S. Suantai, D. Chumpungam, P. Sarnmeta, Existence of fixed
points of weak enriched nonexpansive mappings in Banach spaces, Carpathian
J. Math. 2021, 37 (2), 287--294.

\bibitem{Lakshmi} P. Mondal, H. Garai, L. K. Dey, On some enriched
contractions in Banach spaces, arXiv:2006.11500 [math.FA].

\bibitem{banach} S. Banach, Sur les op\'{e}rations dans les ensembles
abstraits et leur application aux \'{e}quations int\'{e}grales, Fund. Math.
1922, 3 (1), 133--181.

\bibitem{nadler} S. B. Nadler Jr, Multi-valued contraction mappings, Pacific
Journal of Mathematics 1969, 30 (2), 475--488.

\bibitem{popescu} O. Popescu, A new class of contractive mappings, Acta
Math. Hungar. 2021, 164 (2), 570--579.

\bibitem{damjan} B. Damjanovi\'{c}, D. \DJ ori\'{c}, Multivalued
generalizations of the Kannan fixed point theorem, Filomat  2011, 25 (1),
125--131.

\bibitem{maryam} M. A. Alghamdi, V. Berinde, N. Shahzad, Fixed Points of
Multivalued Nonself Almost Contractions, Journal of Applied Mathematics
2013, vol. 2013, Article ID 621614, 6  pages,.
https://doi.org/10.1155/2013/621614

\bibitem{kamran} T. Kamran, Mizoguchi-Takahashi's type fixed point theorem,
Computers \& Mathematics with Applications 2009, 57, 507-511

\bibitem{kannan}  R:. Kannan, Some results on fixed points II. Am. Math.
Mon. 1969, 76, 405-408

\bibitem{chat} S. K. Chatterjea, Fixed-point theorems, C. R. Acad. Bulgare
Sci. 1972, (25), 727-730.

\bibitem{Neam} K. Neammanee,  A. Kaewkhao, Fixed point theorems of
multi-valued Zamfirescu mapping. Journal of Mathematics Research 2010, 2(2),
150.

\bibitem{ciric}  L. B. Ciric,  Fixed points for generalized multi-valued
contractions. Math. Vesnik 1972, 9(24), 265-272.
\end{thebibliography}
\end{document}